\titleformat*{\subsection}{\Large\bfseries}
\titleformat*{\subsubsection}{\large\bfseries}
\titleformat*{\paragraph}{\large\bfseries}
\titleformat*{\subparagraph}{\large\bfseries}
\renewcommand{\@seccntformat}[1]{\csname the#1\endcsname. }
\renewenvironment{abstract}{%
    \if@twocolumn
      \section*{\abstractname}%
    \else 
      \begin{center}%
        {\bfseries \Large\abstractname\vspace{\z@}}
      \end{center}%
      \quotation
    \fi}
    {\if@twocolumn\else\endquotation\fi}
\theoremstyle{plain}
\newtheorem{thm}{Theorem}
\theoremstyle{definition}
\providecommand{\keywords}[1]{{\bf{Keywords:}} #1}
\title{An inductive proof of the Bollob\'{a}s two family theorem}
\author{Sayan Goswami \\ \textit{sayangoswami@imsc.res.in} \\ The Institute of Mathematical Sciences\\ A CI of Homi Bhabha National Institute\\ CIT Campus, Taramani, Chennai 600113, India.}
\date{\vspace{-5ex}}
\begin{document}
\maketitle
\begin{abstract}
Inspired by the inductive proof of the LYM-inequality given by P.
Frankl in \cite{key-8}, we provide an inductive proof of the Bollob\'{a}s
two family theorem \cite{key-7}.
\end{abstract}


\keywords  Bollobas two family theorem \\

The LYM inequality established by Lubell \cite{key-5}, Yamamoto \cite{key-4}
and Meshalkin \cite{key-2} is one of the fundamental result in combinatorics.
One can find several proofs of this result in literature. In \cite{key-8},
Frankl found an inductive proof of this result that uses elementary
probability theory. In \cite{key-7}, Bollob\'{a}s found a generalization
of this inequality known as Bollob\'{a}s theorem. The known proof
uses random permutation and independence of random variables. In \cite{key-6},
several results can be found towards this direction. Inspired by \cite{key-8},
here we provide a relatively elementary proof of Bollob\'{a}s theorem
that uses elementary probability theory and induction argument.
\begin{thm}
(Bollob\'{a}s two family theorem) If $m\in\mathbb{N}$ and $\mathcal{F}_{1}=\left\{ A_{1},\ldots,A_{m}\right\} $,
$\mathcal{F}_{2}=\left\{ B_{1},\ldots,B_{m}\right\} $ be two family
of sets over $X=\left\{ 1,2,\ldots,n\right\} $ such that $A_{i}\cap B_{i}=\emptyset$
and $A_{i}\cap B_{j}\neq\emptyset$ for all $i,j\in\left\{ 1,2,\ldots,m\right\} $,
then $\sum_{A_{i}\in\mathcal{F}_{1}}\frac{1}{\binom{\vert A_{i}\vert+\vert B_{i}\vert}{\vert A_{i}\vert}}\leq1.$
\end{thm}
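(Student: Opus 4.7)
The plan is to induct on $n=|X|$, using the classical random-permutation setup to recast the claim as $\sum_i\Pr[E_i]\le 1$ and invoking the inductive hypothesis where the usual disjointness-of-events step would normally appear. The base case $n=1$ is immediate: the hypotheses $A_i\cap B_i=\emptyset$ and $A_i\cap B_j\ne\emptyset$ force $m\le 1$, and the single surviving summand equals $1$; the trivial case $m=1$ at any $n$ is handled directly in the same way.

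For the inductive step, assume $m\ge 2$, so that every $B_i$ is nonempty. Let $\pi$ be a uniformly random bijection $\{1,\ldots,n\}\to X$ and let $E_i$ be the event that every element of $A_i$ precedes every element of $B_i$ in $\pi$; a short counting argument gives $\Pr[E_i]=\binom{|A_i|+|B_i|}{|A_i|}^{-1}$. Conditioning on $\omega=\pi(n)$, I would partition the indices into
\[
I(\omega)=\{i:\omega\in A_i\},\ J(\omega)=\{i:\omega\in B_i\},\ K(\omega)=\{i:\omega\notin A_i\cup B_i\},
\]
which are disjoint by $A_i\cap B_i=\emptyset$. A direct case analysis then shows that $\Pr[E_i\mid\pi(n)=\omega]$ equals $0$, $\binom{|A_i|+|B_i|-1}{|A_i|}^{-1}$, or $\binom{|A_i|+|B_i|}{|A_i|}^{-1}$ according as $i\in I(\omega)$, $J(\omega)$, or $K(\omega)$. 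The heart of the argument is to observe that
\[
\mathcal{F}_{1}'=\{A_i:i\in J(\omega)\cup K(\omega)\},\qquad\mathcal{F}_{2}'=\{B_i\setminus\{\omega\}:i\in J(\omega)\}\cup\{B_i:i\in K(\omega)\}
\]
form a Bollob\'as pair of families on $X\setminus\{\omega\}$; the inductive hypothesis then gives $\sum_i\Pr[E_i\mid\pi(n)=\omega]\le 1$ for every $\omega$, and averaging by total probability produces $\sum_i\Pr[E_i]\le 1$.

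The main obstacle I expect is verifying that the reduced pair is indeed a Bollob\'as pair: one must see that $A_i\cap(B_j\setminus\{\omega\})\ne\emptyset$ for distinct $i,j\in J(\omega)\cup K(\omega)$ even after $\omega$ has been removed from the $B_j$'s. The key point is that $\omega\notin A_i$ for every $i\in J(\omega)\cup K(\omega)$ --- immediate when $i\in K(\omega)$, and forced by $A_i\cap B_i=\emptyset$ when $i\in J(\omega)$ --- so $\omega$ could never have been in $A_i\cap B_j$, and the original nonempty intersection survives the deletion. Once this is settled, the rest is routine; equivalently one may sum the inductive inequality over all $\omega\in X$ and apply the elementary identity $b\binom{a+b-1}{a}^{-1}=(a+b)\binom{a+b}{a}^{-1}$ to collapse the result into $n\sum_i\binom{|A_i|+|B_i|}{|A_i|}^{-1}\le n$.
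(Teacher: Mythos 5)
Your proof is correct. It shares the paper's skeleton --- induct on $n$, delete one element $\omega$ of $X$, check that the surviving pairs form a Bollob\'as pair on $X\setminus\{\omega\}$, apply the inductive hypothesis, and average over $\omega$ --- and you resolve the one genuine verification point (that $A_i\cap(B_j\setminus\{\omega\})\neq\emptyset$ survives the deletion because $\omega\notin A_i$ for every retained index $i$) exactly as needed; the case analysis for $\mathbb{P}(E_i\mid\pi(n)=\omega)$ is also right, with the $I(\omega)$ case relying on $B_i\neq\emptyset$, which your reduction to $m\geq2$ supplies. The execution, however, differs from the paper's in one substantive way. The paper retains only the indices with $x\in B_i$ (your $J(\omega)$), i.e.\ $\mathcal{G}_1(x)=\{A_i:x\notin A_i,\ x\in B_i\}$, and then evaluates $\mathbb{P}(A_i\in\mathcal{G}_1(x))$ as $\mathbb{P}(x\in B_i\mid x\in A_i\cup B_i)=\frac{|B_i|}{|A_i|+|B_i|}$. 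For a single random $x$ uniform on $X$ this probability is in fact $\frac{|B_i|}{n}$, and the resulting inequality, $\sum_i\frac{|A_i|+|B_i|}{n}\binom{|A_i|+|B_i|}{|A_i|}^{-1}\leq1$, yields the theorem only when $|A_i|+|B_i|=n$ for all $i$; the conditioning on the event $x\in A_i\cup B_i$, which varies with $i$, is not something a single expectation over $x$ provides. Your decision to also keep the pairs with $\omega\notin A_i\cup B_i$, each contributing $\binom{|A_i|+|B_i|}{|A_i|}^{-1}$, is exactly what makes the weights add up, via $|B_i|\binom{|A_i|+|B_i|-1}{|A_i|}^{-1}+(n-|A_i|-|B_i|)\binom{|A_i|+|B_i|}{|A_i|}^{-1}=n\binom{|A_i|+|B_i|}{|A_i|}^{-1}$. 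So your version is not merely a restyling: it supplies a correct averaging identity where the paper's computation glosses over the weighting (and your closing double-counting remark even removes the probabilistic language entirely), at the cost of a somewhat heavier case analysis than the paper's two-line calculation.
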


\begin{proof}
For $n=1$, the result is true. So assume that the result is true
over any set of cardinality $n-1.$ Let $\mathcal{F}_{1}$ and $\mathcal{F}_{2}$
be two given families. For any $x\in X,$ choose $\mathcal{G}_{1}\left(x\right)=\left\{ A_{i}\in\mathcal{F}_{1}:x\notin A_{i}\,\text{and}\,x\in B_{i}\right\} .$
Note that for every pair (except the family $\mathcal{F}_{1}=\left\{ X\right\} $
and $\mathcal{F}_{2}=\left\{ \emptyset\right\} $, where the result
trivially true) of $\left(A_{i},B_{i}\right)$ one such $x\in X$
exists, infact every $x\in B_{i}$ will work. Now choose $\mathcal{G}_{2}\left(x\right)=\left\{ B_{i}\setminus\left\{ x\right\} :A_{i}\in\mathcal{G}_{1}\left(x\right)\right\} .$
Then it can be easily checked that the elements of $\mathcal{G}_{1}\left(x\right)$
and $\mathcal{G}_{2}\left(x\right)$ satisfies the condition of the
theorem over the set $X\setminus\left\{ x\right\} .$ So by induction
\[
1\geq\mathbb{E}\left(\sum_{A_{i}\in\mathcal{G}_{1}\left(x\right)}\frac{1}{\binom{\vert A_{i}\vert+\vert B_{i}\vert-1}{\vert A_{i}\vert}}\right)
\]
\[
\qquad\qquad=\sum_{A_{i}\in\mathcal{F}_{1}}\mathbb{P}\left(A_{i}\in\mathcal{G}_{1}\left(x\right)\right)\cdot\frac{1}{\binom{\vert A_{i}\vert+\vert B_{i}\vert-1}{\vert A_{i}\vert}}
\]
\[
\qquad\:\qquad\:\,\,\quad\;\:\:\,\,=\sum_{A_{i}\in\mathcal{F}_{1}}\mathbb{P}\left(x\in B_{i}\vert x\in A_{i}\cup B_{i}\right)\cdot\frac{1}{\binom{\vert A_{i}\vert+\vert B_{i}\vert-1}{\vert A_{i}\vert}}
\]
\[
\;\qquad=\sum_{A_{i}\in\mathcal{F}_{1}}\frac{\vert B_{i}\vert}{\vert A_{i}\vert+\vert B_{i}\vert}\cdot\frac{1}{\binom{\vert A_{i}\vert+\vert B_{i}\vert-1}{\vert A_{i}\vert}}
\]
\[
\qquad\qquad\quad=\sum_{A_{i}\in\mathcal{F}_{1}}\frac{\vert A_{i}\vert+\vert B_{i}\vert-\vert A_{i}\vert}{\vert A_{i}\vert+\vert B_{i}\vert}\cdot\frac{1}{\binom{\vert A_{i}\vert+\vert B_{i}\vert-1}{\vert A_{i}\vert}}
\]
\[
\!\!\!\!\!\!\!\!\!\!\!\!\!\!\!\!\!\!\!\!\!\!\!\!=\sum_{A_{i}\in\mathcal{F}_{1}}\frac{1}{\binom{\vert A_{i}\vert+\vert B_{i}\vert}{\vert A_{i}\vert}}.
\]
This completes the proof.
\end{proof}

\end{document}